\title{A generalization of the divide and conquer algorithm for the symmetric tridiagonal eigenproblem}
\author{
 Do Young Kwak\thanks{Korea Advanced Institute of Science and Technology,
Daejeon, Korea 305-701. {{\tt email:kdy@kaist.ac.kr}}. This work was supported by NRF, No.2014R1A2A1A11053889.}
\and Jaeyeon Kim \thanks{ Korea Advanced Institute of Science and Technology, Daejeon, Korea 305-701  {{\tt
email:jaeyeon@kaist.ac.kr }}.}} \vspace{1.06in}
\date{}
\def\eps{\epsilon}
\def\b0{{\bf 0}}
\def\be{{\bf e}}
\def\bu{{\mathbf u}}
\def\bv{{\mathbf v}}
\def\bw{{\mathbf w}}
\def\bx{{\mathbf x}}
\def\bz{{\mathbf z}}
\begin{document}
  \maketitle \pagestyle{myheadings} \thispagestyle{plain}
\markboth{Do Young Kwak and Jaeyeon Kim} {A generalization of the Cuppen's algorithm}

\begin{abstract}
In this paper, we present a generalized Cuppen's divide-and-conquer algorithm for the symmetric tridiagonal eigenproblem.
We extend the Cuppen's work to the rank two modifications of the form $A =T +\beta_1\bw_1\bw_1^T + \beta_2\bw_2\bw_2^T$, where $T$ is a block tridiagonal matrix having three blocks.  
We introduce a new deflation technique and obtain a  secular equation, for which the distribution of eigenvalues
 is nontrivial. We present a  way to count the number of eigenvalues in each subinterval. It turns out that each 
 subinterval contains either none, one or two eigenvalues. Furthermore, computing eigenvectors preserving the orthogonality are also suggested.
  Some numerical results, showing our algorithm can calculate the eigenvalue twice as fast as the Cuppen's divide-and-conquer algorithm, are included. 
\end{abstract}

\noindent {\bf Key words.} Eigenvalues of symmetric tridiagonal matrix, Cuppen's divide and conquer method,
Rank two modifications, Secular equation.

\noindent {\bf AMS subject classifications.} 65F15, 15A18

\section{Introduction}
Given a $n\times n$ tridiagonal real symmetric matrix $A$, the symmetric tridiagonal
eigenproblem is to find all the eigenvalues and the corresponding eigenvectors of $A$. In this paper we generalize the Cuppen's divide and conquer algorithm (CDC) \cite{Cuppen}  for
the symmetric tridiagonal eigenproblem by considering a rank two modification. The CDC  algorithm starts with decomposing $A$ as a rank one modification
\begin{equation}\label{rank-one-mod}
A =\left[
  \begin{array}{c;{2pt/2pt}c}
  T_1 &  \b0 \\ \hdashline[2pt/2pt]
 \b0  & T_2   \\
  \end{array}
  \right] +\beta \bw\bw^T,\quad \bw=\be_k+\be_{k+1},
\end{equation} where  $T_1, T_2$ are   symmetric   tridiagonal matrices of order
$k\geqq1$, $n-k\geqq1$ respectively, and $\beta\ne 0$ is $(k,k+1)$ entries of $A$.
By diagonalizing $T_i$ as $T_i=Q_i D_iQ_i^T$ and deflating the cases when the eigenvalues of $A$ coincide with the diagonals of $D_i$,
secular equation of CDC is derived. By using the eigenvalues obtained from the secular equation, we can
compute eigenvectors.

It is tempting to  generalize the  CDC algorithm to the following rank two modifications:
\begin{equation}
 \label{rank-two-modification2}
A =T +\beta_1\bw_1\bw_1^T + \beta_2\bw_2\bw_2^T,
  \end{equation}
 where $T$ is  block tridiagonal, and $\bw_1=\be_{k_1} + \be_{k_1+1}$,  $\bw_2=\be_{k_2}+\be_{k_2+1}$.
 Then  we mimic the CDC algorithm to compute eigenvalues and eigenvectors of (\ref{rank-two-modification2}) by using spectral decomposition of $T_i$'s.
We deflate the cases when some of the eigenvalues of $A$ coincide with the diagonals of $D_i$.
 Then we can derive the secular equation. By computing the eigenvalues from the secular equation, we can finally obtain the eigenvectors of $A$.

 \section{Dividing Step} \label{dividing step}

 We rewrite $A$ blockwise as   follows:
\begin{equation}\label{rank-two-modification}
\begin{array}{l l}
A
&=\left[
  \begin{array}{c;{2pt/2pt}c;{2pt/2pt}c}
  T_1 & \b0 & \b0\\ \hdashline[2pt/2pt]
 \b0  & T_2 & \b0\\ \hdashline[2pt/2pt]
\b0   &\b0 & T_3  \\
  \end{array}
  \right] +\beta_1 \left[
  \begin{array}{c;{2pt/2pt}c;{2pt/2pt}c}
  \ \ {}_1 &{}_1 \ \  &\b0 \\ \hdashline[2pt/2pt]
   \ \ {}^1& ^1\ \ &\b0\\ \hdashline[2pt/2pt]
\b0   &\b0 &\b0  \\
  \end{array}
  \right]  + \beta_2\left[
  \begin{array}{c;{2pt/2pt}c;{2pt/2pt}c}
\b0  &\b0 & \b0 \\ \hdashline[2pt/2pt]
  \b0 &\ \ {}_1 & {}_1 \ \ \\ \hdashline[2pt/2pt]
   \b0&  \ \ {}^1& ^1\ \ \\
  \end{array}
  \right]
  \end{array},
  \end{equation}
  where $T_1$, $T_2$ and $T_3$ are symmetric  tridiagonal matrices of order $k_1\geqq 1$, $k_2\geqq 1$, $k_3\geqq 1$ respectively, with  $k_1+k_2+k_3=n$ and $\beta_1\ne 0$, $\beta_2\ne 0$ are $(k_1,k_1+1)$, $(k_2,k_2+1)$ entries of
  $A$ respectively.
The computation of the spectral decomposition of each $T_i$
  $$T_i=Q_iD_iQ_i^T,\quad i=1,2,3$$
    is cheaper than (about $2/3$) those of CDC  where  $D_i$ are diagonal matrices and  $Q_i^TQ_i=I$.  Let
  $$
  Q=\left[
  \begin{array}{c;{2pt/2pt}c;{2pt/2pt}c}
  Q_1 &\b0 &\b0 \\ \hdashline[2pt/2pt]
  \b0 & Q_2 &\b0 \\ \hdashline[2pt/2pt]
 \b0  & \b0& Q_3  \\
  \end{array}
  \right],
  \\\
  D=\left[
  \begin{array}{c;{2pt/2pt}c;{2pt/2pt}c}
  D_1 & \b0&\b0 \\ \hdashline[2pt/2pt]
\b0   & D_2 &\b0 \\ \hdashline[2pt/2pt]
  \b0 &\b0 & D_3  \\
  \end{array}
  \right].
  $$
 Since $Q$ is an orthogonal matrix, eigenvalues of $Q^TAQ$ are same as those of $A$. We have
 \begin{eqnarray}\label{rank-two-modified} Q^TAQ=Q^T(T+\beta_1\bw_1\bw_1^T+\beta_2\bw_2\bw_2^T)Q
=D+\beta_1\bv_1\bv_1^T+\beta_2\bv_2\bv_2^T,
\end{eqnarray}
 where $\bv_1=Q^T\bw_1$, $\bv_2=Q^T\bw_2$.
 Then
 \begin{equation}\label{v1v2}
 \bv_1=\begin{pmatrix} \text{last column of }Q_1^T\\\text{first column of }Q_2^T \\ \b0\end{pmatrix},
 \bv_2=\begin{pmatrix}\b0\\\text{last column of }Q_2^T\\ \text{first column of }Q_3^T\end{pmatrix}.
 \end{equation}
 Let $\bx$ be an eigenvector of $Q^TAQ$ corresponding to an eigenvalue $\lambda$. Then we have
 \begin{eqnarray*}
(D+\beta_1\bv_1\bv_1^T+\beta_2\bv_2\bv_2^T)\bx=\lambda\bx,
\end{eqnarray*} which is equivalent to
  \begin{eqnarray}\label{derive-deflation2}
 (D-\lambda I)\bx =-\beta_1(\bv_1^T\bx)\bv_1-\beta_2(\bv_2^T\bx)\bv_2.
\end{eqnarray}
 If $(D-\lambda I)$ is invertible, $\bx$ can be expressed as a linear combination of  $\bu_1$ and $\bu_2$ where $\bu_1=(\lambda I -D)^{-1}\bv_1$, $\bu_2=(\lambda I -D)^{-1}\bv_2$.
 Using this  expression of $\bx$, we can derive the secular equation (see Section \ref{secularequation}).

Note that the matrix  $(D-\lambda I)$ is singular if and only if the eigenvalues of $A$ coincide with the diagonals of $D$.
 In order to exclude those cases, we deflate them by computing the corresponding eigenvectors. This will be
 explained in detail in the next section.

\section{Deflation} \label{deflation step} In this section we will determine the cases when some of the diagonal entries of $D$ are
  eigenvalues  of $A$.
Expressing equation (\ref{derive-deflation2})  componentwise, we see
 \begin{eqnarray}\label{derive-deflation3}\begin{pmatrix}(d_1-\lambda)x_1\\(d_2-\lambda)x_2\\
\vdots\\(d_n-\lambda)x_n\end{pmatrix}+\begin{pmatrix}\beta_1(\bv_1^T\bx)v_{11}\\
 \beta_1(\bv_1^T\bx)v_{12}\\\vdots\\\beta_1(\bv_1^T\bx)v_{1n}\end{pmatrix}
 +\begin{pmatrix}\beta_2(\bv_2^T\bx)v_{21}\\\beta_2(\bv_2^T\bx)v_{22}\\\vdots\\
 \beta_2(\bv_2^T\bx)v_{2n}\end{pmatrix}=\b0.\end{eqnarray}

We will check row by row whether each diagonal entry is an eigenvalue or not.
We first assume $d_i$ is an eigenvalue. Based on the values of $v_{1i}, v_{2i}$, we can divide it into four cases as follows:

\begin{enumerate}
\item Assume $v_{1i}=0$ and $v_{2i}=0$. Then $\be_i$ is  an eigenvector  for $d_i$.

\item Assume $v_{1i}=0$ and $v_{2i}\ne0$. Since $\beta_2\ne0$, we have $\bv_2^T\bx=0$. Hence,
deleting the $i$-th row in
equation (\ref{derive-deflation3}), we obtain  the following equation.
\begin{equation}\label{rank-one-modificiation}
 (D'-\lambda I)\bx'+\beta_1(\bv'_1(\bv_1')^T)\bx'=0,
\end{equation} where $D', \bv'_1, \bx'$ are obtained by omitting the $i$-th entry.
In this case we can proceed exactly as in Cuppen's divide and conquer method to check whether $d_i$ is an eigenvalue or not. This algorithm is summarized as follows. \cite{Cuppen}
\begin{enumerate}
\item If there exists $j$ such that $d_j=d_i, v_{1j}=0$, then $d_i$ is an eigenvalue of $D'$ (and hence of $D$) and $\bx'=\be_j'$ where $\be_j'$ is obtained from  $\be_j$ by omitting the
    $i$-th entry.

\item If there exists $j, k$ such that $d_j=d_k=d_i$, then $d_i$ is an eigenvalue and
$$\bx'= \begin{pmatrix}
\left[ \begin{array}{c} 0\\\vdots\\-v_{1k}\\\vdots\\v_{1j}\\ \vdots\\0 \end{array} \right] & \begin{array}{c}jth \\ \\ kth\\ \end{array}  \end{pmatrix} ',$$
where $\prime$ is obtained by omitting the $i$-th entry.
\item If there is no $j$ such that $d_j=d_i$, we obtain the secular equation of (\ref{rank-one-modificiation}) from the Cuppen's divide and conquer method as follows:
 \begin{equation}\label{dis-f-11}
   f_{11}(\lambda):=1-\beta_1 \displaystyle\sum_{q\ne i} \frac{v_{1q}^2}{(\lambda-d_q)}.
 \end{equation}
If $f_{11}(d_i)=0$, then $d_i$ is an eigenvalue and $x'_r=\frac{v_{1r}}{d_r-d_i}$ for all $r\ne i$.
Hence we define $ f_{11}(\lambda)$ as a discriminant for $d_i$.
\end{enumerate}
For all three cases, we can find $x_i$ by solving  $\bv_2^T\bx=0$ with given $\bx'$ above. Therefore, $d_i$ is an eigenvalue and the corresponding eigenvector is $\bx$.
\item Assume $v_{1i}\ne0$ and $v_{2i}=0$. We can apply the same method as above and obtain the following a discriminant:
\begin{equation}\label{dis-f-12}
   f_{12}(\lambda):=1-\beta_2 \displaystyle\sum_{q\ne i} \frac{v_{2q}^2}{(\lambda-d_q)}.
 \end{equation}

\item Finally assume $v_{1i}\ne0$ and $v_{2i}\ne0$. Suppose the diagonal entry $d_i$ is repeated  exactly $p$ times. Rearranging the indices, let us assume  $d_{m_1}=d_{m_2}=\dots=d_{m_p}$.
From equation (\ref{derive-deflation3}), we see the $j$-th row, for $j=m_1,\cdots, m_p$, satisfy
the following equations:
  \begin{equation} \label{tri-di}
 \left\{
  \begin{matrix}
\beta_1(\bv_1^T\bx)v_{1m_1}+\beta_2(\bv_2^T\bx)v_{2m_1}=0\\\vdots\\
\beta_1(\bv_1^T\bx)v_{1m_p}+\beta_2(\bv_2^T\bx)v_{2m_p}=0.
\end{matrix}
\right .
\end{equation}
In matrix form, this can be written as   $$C\bz=\b0, $$ where
$$C:=\begin{bmatrix} v_{1m_1} & v_{2m_1} \\  v_{1m_2} & v_{2m_2} \\  \vdots & \vdots \\  v_{1m_p} & v_{2m_p} \end{bmatrix} \mbox{ is $p\times 2$ and } \bz=\begin{bmatrix} \beta_1(\bv_1^T\bx)\\ \beta_2(\bv_2^T\bx) \end{bmatrix}\mbox{ is $2\times 1$ vector. }  $$
Depending on the rank of $C$, these are divided into two cases:
\begin{enumerate}
\item Assume $rank(C)=2$. Then equation (\ref{tri-di}) has the trivial solution, which means $\bv_1^T\bx=0$ and $\bv_2^T\bx=0$.  Since $(d_q -d_i) \ne 0$, we see from equation (\ref{derive-deflation3}) that $x_q=0$, for all $q\ne m_1, m_2 \dots m_p$. Therefore,
 \begin{equation} \label{eq-v1v2}
 \left\{
  \begin{matrix}
\bv_1^T\bx=v_{1m_1}x_{m_1}+v_{1m_2}x_{m_2}+\dots+v_{1m_p}x_{m_p}=0\\
\bv_2^T\bx=v_{2m_1}x_{m_1}+v_{2m_2}x_{m_2}+\dots+v_{2m_p}x_{m_p}=0.
\end{matrix}
\right.
\end{equation}
From these equations, we can find exactly $p-2$ independent eigenvectors.
In this case we still have two identical diagonal entries which are not deflated.

\item Assume $rank(C)=1$. Consider the case when $\bv_1^T\bx=0$.  In this case it follows that $\bv_2^T\bx=0$ and we can find exactly $p-1$ nontrivial solutions associated with eigenvalue $d_i$.
Since $x_q=0$ for every $q\ne m_1, m_2, \dots, m_p$,  by the same way as above  we can deflate the corresponding $p-1$ rows and columns. After the deflation, we would have only one diagonal entry of $D$ which is equal to $d_i$.
Without loss of generality, we may assume that this diagonal entry  is $d_{m1}$.

Next consider the case when  $\bv_1^T\bx\ne 0$. In this case, it follows that $\bv_2^T\bx \ne 0$. Fix any such $\bx$. Then we have
\begin{equation} \label{v2x2}
\bv_2^T\bx =l,
\end{equation} for some $l\ne0$.
From equation (\ref{tri-di}), we obtain
\begin{equation} \label{v1tx}
\bv_1^T\bx =-\frac{\beta_2v_{2m_1}(\bv_2^T\bx)}{\beta_1v_{1m_1}}=-\frac{\beta_2 l v_{2m_1}}{\beta_1v_{1m_1}}.
\end{equation}

Substitute this value of $\bv_1^T\bx$  into equation (\ref{derive-deflation3}), we get
\begin{equation} \label{xl}
x_q =\frac{(-\frac{\beta_2 l v_{2m_1}}{v_{1m_1}})v_{1q}+\beta_2 l v_{2q}}{d_{m_1}-d_q}=\frac{\beta_2 l}{v_{1m_1}}  \cdot \frac{v_{1m_1}v_{2q}-v_{2m_1}v_{1q}}{d_{m_1}-d_q}
\end{equation}
for all $q\ne m_1$. Solving equation (\ref{v1tx}) using these $x_q$, we obtain $x_{m_1}$ as

\begin{equation} \label{x1}
\begin{array}{l l}
x_{m_1} &=-\frac{\beta_2 l v_{2m_1}}{\beta_1v_{1m_1}^2}-\frac{1}{v_{1m_1}} \displaystyle\sum_{q\ne m_1} v_{1q}x_q\\&=-\frac{\beta_2 l v_{2m_1}}{\beta_1v_{1m_1}^2}-\frac{\beta_2 l}{v_{1m_1}^2} \displaystyle\sum_{q\ne m_1} v_{1q}\frac{v_{1m_1}v_{2q}-v_{2m_1}v_{1q}}{d_{m_1}-d_q}.
\end{array}
\end{equation}

Substituting the values  $x_q$ in (\ref{xl}) and (\ref{x1}) for all  $q=1,\cdots,$  into the equation (\ref{v2x2}),
$$
\begin{array}{l l}
l&=\bv_2^T\bx\\&=\displaystyle\sum_{q\ne m_1} v_{2q}x_q + v_{2m_1} x_{m_1}\\
 &=\left(\frac{\beta_2 l}{v_{1m_1}} \displaystyle\sum_{q\ne m_1}  v_{2q} \frac{v_{1m_1}v_{2q}-v_{2m_1}v_{1q}}{d_{m_1}-d_q}\right)+\\&\quad\left( -\frac{\beta_2 l v_{2m_1}^2}{\beta_1v_{1m_1}^2}-\frac{\beta_2 l v_{2m_1}}{v_{1m_1}^2} \displaystyle\sum_{q\ne m_1} v_{1q}\frac{v_{1m_1}v_{2q}-v_{2m_1}v_{1q}}{d_{m_1}-d_q}\right)\\
&= -\frac{\beta_2 l v_{2m_1}}{\beta_1v_{1m_1}^2} +\frac{\beta_2 l}{v_{1m_1}^2}\displaystyle\sum_{q\ne m_1} \frac{v_{1m_1}^2v_{2q}^2 -2v_{1m_1}v_{2m_1}v_{1q}v_{2q}+v_{2m_1}^2v_{1q}^2}{d_{m_1}-d_q}.
\end{array}
$$
Since $l\ne0$, we obtain
$$
1+\frac{\beta_2 v_{2m_1}}{\beta_1v_{1m_1}^2} -\frac{\beta_2}{v_{1m_1}^2}\displaystyle\sum_{q\ne m_1} \frac{v_{1m_1}^2v_{2q}^2 -2v_{1m_1}v_{2m_1}v_{1q}v_{2q}+v_{2m_1}^2v_{1q}^2}{d_{m_1}-d_q}=0.
$$

Therefore, we define the following function as the discriminant in this case:
$$f_{2}(\lambda):=\beta_1 v_{1m_1}^2 + \beta_2 v_{2m_1}^2-\beta_1\beta_2  \displaystyle\sum_{q\ne m_1} \frac{(v_{2m_1}v_{1q}-v_{1m_1}v_{2q})^2}{\lambda-d_q}.$$

If $f_{2}(d_i)=0$, then $d_i$ is an eigenvalue and the corresponding eigenvector is $\bx$ given in    (\ref{xl}) and  (\ref{x1}). Otherwise, $d_i$ cannot be an eigenvalue.
\end{enumerate}

\end{enumerate}

Since we have gone through every possible cases, $d_i$ cannot be an eigenvalue after deflation.

 \renewcommand{\labelenumi}{(\arabic{enumi})}
\section{Secular Equation}\label{secularequation}
We now assume that we have deflated all the cases as above.
\begin{theorem}
The eigenvalues of $$D+\beta_1\bv_1\bv_1^T+\beta_2\bv_2\bv_2^T$$ are the roots of the secular equation defined by
\begin{eqnarray}\begin{aligned}\label{secular}
f(\lambda)&: =&\beta_1\beta_2\displaystyle\sum_{q=1}^n \sum_{r= q+1}^n\frac{(v_{1q}v_{2r}-v_{1r}v_{2q})^2}{(\lambda -d_q)(\lambda -d_r)} - \displaystyle\sum_{q=1}^n \frac{\beta_1v_{1q}^2+\beta_2v_{2q}^2}{\lambda -d_q}+1.
\end{aligned}\end{eqnarray}
\end{theorem}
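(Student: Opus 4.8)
The plan is to convert the eigenvalue problem into a $2\times2$ determinant condition and then simplify that determinant into the stated rational function. Since all the cases of Section~\ref{deflation step} have been deflated, for any eigenvalue $\lambda$ of $D+\beta_1\bv_1\bv_1^T+\beta_2\bv_2\bv_2^T$ the matrix $D-\lambda I$ is invertible. First I would take a corresponding eigenvector $\bx$ and rewrite (\ref{derive-deflation2}) as
\[
\bx=\beta_1(\bv_1^T\bx)\bu_1+\beta_2(\bv_2^T\bx)\bu_2,\qquad \bu_1=(\lambda I-D)^{-1}\bv_1,\quad \bu_2=(\lambda I-D)^{-1}\bv_2,
\]
exactly as anticipated in Section~\ref{dividing step}. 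Setting $a=\bv_1^T\bx$, $b=\bv_2^T\bx$ and taking the inner product of this identity with $\bv_1$ and with $\bv_2$ produces the homogeneous linear system
\[
\begin{pmatrix} 1-\beta_1\alpha_{11} & -\beta_2\alpha_{12}\\[2pt] -\beta_1\alpha_{12} & 1-\beta_2\alpha_{22}\end{pmatrix}\begin{pmatrix} a\\ b\end{pmatrix}=\b0,
\]
where $\alpha_{11}=\sum_q v_{1q}^2/(\lambda-d_q)$, $\alpha_{22}=\sum_q v_{2q}^2/(\lambda-d_q)$, $\alpha_{12}=\alpha_{21}=\sum_q v_{1q}v_{2q}/(\lambda-d_q)$. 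If $a=b=0$ the displayed formula gives $\bx=\b0$; hence $\bx\ne\b0$ forces $(a,b)\ne(0,0)$, and the existence of an eigenvector for $\lambda$ is equivalent to the singularity of this $2\times2$ matrix.

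Next I would expand the determinant,
\[
(1-\beta_1\alpha_{11})(1-\beta_2\alpha_{22})-\beta_1\beta_2\alpha_{12}^2=1-(\beta_1\alpha_{11}+\beta_2\alpha_{22})+\beta_1\beta_2(\alpha_{11}\alpha_{22}-\alpha_{12}^2),
\]
and identify each piece. The term $\beta_1\alpha_{11}+\beta_2\alpha_{22}$ is $\sum_q(\beta_1v_{1q}^2+\beta_2v_{2q}^2)/(\lambda-d_q)$. The crucial step is a Lagrange-type identity (a special case of the Cauchy--Binet formula):
\[
\alpha_{11}\alpha_{22}-\alpha_{12}^2=\sum_{q=1}^{n}\sum_{r=1}^{n}\frac{v_{1q}^2v_{2r}^2-v_{1q}v_{2q}v_{1r}v_{2r}}{(\lambda-d_q)(\lambda-d_r)}=\sum_{q=1}^{n}\sum_{r=q+1}^{n}\frac{(v_{1q}v_{2r}-v_{1r}v_{2q})^2}{(\lambda-d_q)(\lambda-d_r)},
\]
where the diagonal terms $q=r$ cancel and symmetrizing each pair $(q,r),(r,q)$ completes the square. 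Substituting these two identities into the expanded determinant yields precisely $f(\lambda)$, so every eigenvalue of the deflated matrix is a root of $f$.

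Finally I would verify the converse. Fix $\lambda$ with $\lambda\ne d_q$ for all $q$ and $f(\lambda)=0$; then the $2\times2$ matrix above is singular, so it has a nonzero kernel vector $(a,b)$. Define $\bx=\beta_1a\bu_1+\beta_2b\bu_2$. The two rows of the kernel equation say exactly that $\bv_1^T\bx=a$ and $\bv_2^T\bx=b$, so $\bx\ne\b0$, and $(D-\lambda I)\bx=-\beta_1a\bv_1-\beta_2b\bv_2=-\beta_1(\bv_1^T\bx)\bv_1-\beta_2(\bv_2^T\bx)\bv_2$, i.e.\ $\bx$ is an eigenvector for $\lambda$. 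Combining this with the conclusion of Section~\ref{deflation step} that no $d_q$ survives as an eigenvalue after deflation shows that the eigenvalues are exactly the roots of $f$. I expect the only genuine work to be the algebraic simplification in the middle paragraph --- recognizing the Lagrange/Cauchy--Binet identity that collapses $\alpha_{11}\alpha_{22}-\alpha_{12}^2$ into a sum of perfect squares over the pairs $q<r$ and checking that the $q=r$ contributions vanish; the rest is bookkeeping.
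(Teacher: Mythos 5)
Your proposal is correct and follows essentially the same route as the paper: both reduce (\ref{derive-deflation2}) to a homogeneous $2\times 2$ system in the scalars $\bv_1^T\bx$, $\bv_2^T\bx$ whose singularity yields $(1-\beta_1 c_1)(1-\beta_2 c_2)=\beta_1\beta_2 c_3^2$, and both finish with the same Lagrange-type identity collapsing $c_1c_2-c_3^2$ into the sum over pairs $q<r$. Your execution is slightly tidier --- imposing the determinant condition directly handles in one stroke the cases the paper splits into $ab\ne 0$, $a=0$, $b=0$, avoids the paper's assumption that $\bv_1$ is not a multiple of $\bv_2$, and adds the converse (every root of $f$ off the $d_q$'s is an eigenvalue), which the theorem's ``are the roots'' phrasing needs but the paper's proof leaves implicit.
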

\begin{proof}
Since the $d_i$ is not an eigenvalue for all $i$, $(D-\lambda I)$ is invertible. From (\ref{derive-deflation2}), we have
\begin{eqnarray}\label{eigenvector} \bx &= &(\lambda I -D)^{-1}(\beta_1(\bv_1^T \bx)\bv_1+\beta_2(\bv_2^T\bx)\bv_2)\\ \label{eigenvector2}&:= &a\bu_1+b\bu_2, \end{eqnarray}
where $a=\beta_1(\bv_1^T\bx)$, $b=\beta_2(\bv_2^T\bx)$, $\bu_1=(\lambda I -D)^{-1}\bv_1$, $\bu_2=(\lambda I -D)^{-1}\bv_2$. Substituting (\ref{eigenvector2})  into (\ref{derive-deflation2}), we obtain
\begin{eqnarray}\label{to-secular}(-a+a\beta_1 c_1 + b\beta_1 c_3) \bv_1 + (-b + b\beta_2 c_2 + a \beta_2 c_3 ) \bv_2=0,
\end{eqnarray}
where $c_1=\bv_1 ^T \bu_1$, $c_2=\bv_2 ^T \bu_2$, $c_3=\bv_1 ^T \bu_2=\bv_2 ^T \bu_1$.

If $\bv_1$ is multiple of $\bv_2$, this problem is reduced to the original divide and conquer method of Cuppen.
Therefore, we assume that $\bv_1$ is not multiple of $\bv_2$. Hence  we have
\begin{eqnarray}\label{equation1} -a+a\beta_1 c_1 + b\beta_1 c_3 =0, \quad -b + b\beta_2 c_2 + a \beta_2 c_3 =0.
 \end{eqnarray} Since $a(1-\beta_1 c_1)=b\beta_1 c_3 $ and $b(1- \beta_2 c_2)= a \beta_2 c_3$, by multiplying we obtain
 \begin{eqnarray}  \label{equation2} ab(1-\beta_1 c_1 )(1- \beta_2 c_2)=ab\beta_1 c_3\beta_2 c_3.\end{eqnarray}
First we assume $ab\ne0$. Then we can cancel $ab$ and we obtain
\begin{eqnarray}\label{pre-secular}
(-1+\beta_1c_1)(-1+\beta_2c_2)=\beta_1 \beta_2 c_3^2.
\end{eqnarray}
Here, we define  the secular equation $f(\lambda)$ as follows:
\begin{eqnarray} \label{secular2}
f(\lambda)&:=\beta_1\beta_2(c_1 c_2 - c_3^2)-\beta_1c_1-\beta_2c_2 +1.
\end{eqnarray} By substituting values of $c_1, c_2$ and $c_3$, we obtain
\begin{eqnarray}\begin{aligned}\label{secular3}
f(\lambda)&=\beta_1\beta_2\displaystyle\sum_{q=1}^n \frac{v_{1q}^2}{\lambda -d_q}\displaystyle\sum_{q=1}^n \frac{v_{2q}^2}{\lambda -d_q}-\beta_1\beta_2\left(\displaystyle\sum_{q=1}^n \frac{v_{1q}v_{2q}}{\lambda -d_q}\right)^2\\\ &\ \ -\displaystyle\sum_{q=1}^n \frac{\beta_1v_{1q}^2+\beta_2v_{2q}^2}{\lambda -d_q}+1
\\&=\beta_1\beta_2\displaystyle\sum_{q=1}^n \sum_{r= q+1}^n\frac{(v_{1q}v_{2r}-v_{1r}v_{2q})^2}{(\lambda -d_q)(\lambda -d_r)} - \displaystyle\sum_{q=1}^n \frac{\beta_1v_{1q}^2+\beta_2v_{2q}^2}{\lambda -d_q}+1.
\end{aligned}\end{eqnarray}If $a=0, b\ne0$, we see $c_3=0$ and $1-\beta_2 c_2 =0$. If $a\ne0, b=0$, we see $c_3=0$ and $ 1-\beta_1c_1=0$.
We see these   cases also satisfy the secular equation (\ref{secular3}).
\end{proof}
We note that the  derivative of $f(\lambda)$ is
$$
f^\prime(\lambda)= -\beta_1\beta_2\displaystyle  \sum_{q=1}^n \sum_{r=q+1}^n \frac{(2\lambda-d_q-d_r)(v_{1q}v_{2r}-v_{1r}v_{2q})^2}{(\lambda -d_q)^2(\lambda -d_r)^2} + \displaystyle\sum_{q=1}^n \frac{\beta_1v_{1q}^2+\beta_2v_{2q}^2}{(\lambda -d_q)^2}.
$$

\section{Computing eigenvalues from secular equation} \label{eigenvalue section}
 Since triple or more identical diagonal entries are deflated by Section 3,  cases (1), (2)-(b), (3)-(b), (4)-(a),(b), there can be at most two identical diagonal entries for each diagonal value of $D$.
Fix an index $i=i_0$.
If there exists an index $j_0\ne i_0$ such that $d_{j_0}=d_{i_0}$, we  call this $d_{i_0}$   multiple.
Otherwise, we call $d_{i_0}$   simple. \newline
First, we will check the sign of  $\displaystyle\lim_{\lambda \rightarrow d_{i_0}^{+}} f(\lambda)$ and
 $\displaystyle\lim_{\lambda \rightarrow d_{i_0}^{-}} f(\lambda)$ which are determined by
the sign of $f(d_{i_0}+\epsilon)$ for small $\epsilon$. By checking the dominating term in (\ref{secular2}),
we see it has the same sign as
$$\left\{
  \begin{array}{l l}
      \beta_1\beta_2\displaystyle\sum_{\substack{q=1 \\q\ne i_0}}^n \frac{(v_{1q}v_{2i_0}-v_{1i_0}v_{2q})^2}{(d_{i_0}-d_q+\epsilon)\epsilon} -\frac{\beta_1v_{1{i_0}}^2+\beta_2v_{2i_0}^2}{\epsilon},& \quad d_{i_0} \text{ simple}\\
  2\beta_1\beta_2\frac{(v_{1j_0}v_{2i_0}-v_{1i_0}v_{2j_0})^2}{\epsilon^2},& \quad d_{i_0} \text{ multiple.}
  \end{array} \right.
  $$
For  multiple $d_{i_0}$, we know that $(v_{1j_0}v_{2i_0}-v_{1i_0}v_{2j_0})^2\ne0$ from (2)-(a), (3)-(a), (4)-(b) of Section 3.
Hence the sign of  $\displaystyle\lim_{\lambda \rightarrow d_{i_0}^{+}} f(\lambda)$ and $\displaystyle\lim_{\lambda \rightarrow d_{i_0}^{-}} f(\lambda)$ will be determined by the coefficients of $\frac1 {\epsilon}$ and
  $\frac1 {\epsilon^2}$, which are given as follows:
\begin{eqnarray}\label{sign-of-fp} g^+_{i_0}:=\left\{
  \begin{array}{l l}
      -\beta_1\beta_2\displaystyle\sum_{\substack{q=1 \\q\ne i_0}} ^n\frac{(v_{1q}v_{2,i_0}-v_{1,i_0}v_{2q})^2}{d_q-d_{i_0}} -\beta_1v_{1i_0}^2-\beta_2v_{2i_0}^2, & \text{$d_{i_0}$ simple}\\
\beta_1\beta_2,  & \text{$d_{i_0}$ multiple.}
  \end{array} \right.
  \end{eqnarray}
  \begin{eqnarray}\label{sign-of-fm} g^-_{i_0}:=\left\{
  \begin{array}{l l}
     \beta_1\beta_2\displaystyle\sum_{\substack{q=1 \\q\ne i_0}}^n \frac{(v_{1q}v_{2,i_0}-v_{1,i_0}v_{2q})^2}{d_q-d_{i_0}}+\beta_1v_{1i_0}^2+\beta_2v_{2i_0}^2, &  \text{$d_{i_0}$ simple}\\
  \beta_1\beta_2,  &  \text{$d_{i_0}$ multiple.}
  \end{array} \right.
  \end{eqnarray}

Now, we are going to determine the number of roots for each interval $(d_i,d_{i+1})$.
First, we are going to deal with the cases without multiple diagonal entries (Section \ref{sdc}).
After that, we are going to deal with the cases having some multiple diagonal entries (Section \ref{mdc}).

\subsection{Cases without multiple diagonal  entries} \label{sdc}
 Suppose that $(n-m)$ diagonal entries are deflated in the steps (1) - (4) of  Section 3.
For the simplicity of presentation, let us sort  and reindex the remaining $m$ diagonal entries in an ascending order as follows:
$$0=d_0<d_1<\dots<d_m<d_{m+1}=\infty.$$
 We let $I_q:=(d_q,d_{q+1})$ and define $I_{qr}$ as the union of $(r-q)$ intervals as
 $$ I_{qr}:=\displaystyle\bigcup_{i=q}^{r-1} I_i = (d_{q},d_{q+1})\cup (d_{q+1},d_{q+2}) \cup \dots \cup (d_{r-1},d_r),$$
  where $q=0,1,2,\dots,m$ and $r=q+1,\dots,m+1$.
 In addition, we define the following matrices of sizes $k_1+k_2$ and  $k_2+k_3$ respectively,
 \begin{equation}\label{Bi}
 B_{i}:=\left[
  \begin{array}{c;{2pt/2pt}c}
  D_i &\\ \hdashline[2pt/2pt]
   & D_{i+1}
  \end{array}
  \right] +\beta_i \bv'_i\bv_i^{\prime T},\quad  i=1, 2
  \end{equation} where $\bv'_i$ is obtained by omitting zero vector part from (\ref{v1v2}).
   Then we see
  \begin{equation}\label{A}
  A=\left[
  \begin{array}{c;{2pt/2pt}c}
  B_1 &\\ \hdashline[2pt/2pt]
   & D_3
  \end{array}
  \right] +\beta_2 \bv_2\bv_2^T
  =\left[
  \begin{array}{c;{2pt/2pt}c}
  D_1 &\\ \hdashline[2pt/2pt]
   & B_2
  \end{array}
  \right] +\beta_1 \bv_1\bv_1^T.
  \end{equation}
   We assume the following spectral decomposition of $B_i$:
   $$B_i=Q'_iD'_iQ_i^{\prime T},\quad Q_i^{\prime T}Q_i'=I,\quad i=1,2$$
and define
$$R_1 =\left[
  \begin{array}{c;{2pt/2pt}c}
  Q'_1&\\ \hdashline[2pt/2pt]
   & I
  \end{array}
  \right] ,\quad
  R_2  =\left[
  \begin{array}{c;{2pt/2pt}c}
  I &\\ \hdashline[2pt/2pt]
   & Q'_2
  \end{array}
  \right].$$
Since $R_1$ and $R_2$ are orthogonal matrices, $A$ has same eigenvalue as $R_1^T A R_1$ and $R_2^T A R_2$:
\begin{equation} \label{rar}
R_1^T A R_1=\left[
  \begin{array}{c;{2pt/2pt}c}
  D'_1&\\ \hdashline[2pt/2pt]
   & D_3
  \end{array}
  \right]+\beta_2 \bz_2\bz_2^T, \quad R_2^T A R_2=\left[
  \begin{array}{c;{2pt/2pt}c}
  D_1&\\ \hdashline[2pt/2pt]
   & D'_2
  \end{array}
  \right]+\beta_1 \bz_1\bz_1^T
\end{equation} where $\bz_2=R_1^T\bv_2, \bz_1=R_2^T\bv_1$.
Also, we define the interval $I'_j$ for $j=1,2,\cdots,m-1$as $$I'_j:=(d''_j,d''_{j+1})$$
where $d''_j$ are diagonal entries from $D'_1$ and $D_3$ (or $D_1$ and $D'_2$) in an ascending  order. Since the
 matrices in (\ref{rar}) are in the rank one modification form, we see from the Cuppen's divide and conquer algorithm that there exists exactly one eigenvalue of $R^T_iAR_i$ (and hence of $A$) in each $I'_j$.

 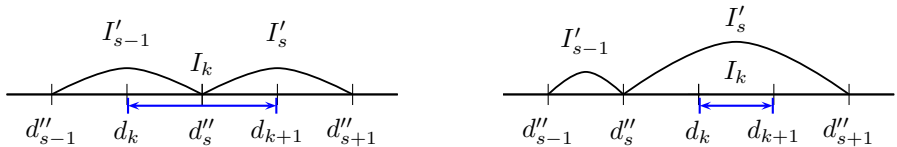
\begin{figure}[ht]
\begin{pspicture}(-3.5,-0.7)(10.2,1.2)
\psaxes[linewidth=0.75\pslinewidth,labels=none](0,0)(-2.6,0)(2.6,0)
\psaxes[linewidth=0.75\pslinewidth,labels=none](6.6,0)(4,0)(9.2,0)
\rput(-2.0,-0.5) {$d''_{s-1}$}
\rput(0.0,0.4) {$I_{k}$}
\rput(0.0,-0.5) {$d''_{s}$}
\rput(2.0,-0.5) {$d''_{s+1}$}
\rput(1.0,-0.5) {$d_{k+1}$}
\rput(-1.0,-0.5) {$d_{k}$}

\rput(-1,0.8) {$I'_{s-1}$}
\rput(1.0,0.8) {$I'_{s}$}
\rput(4.6,-0.5) {$d''_{s-1}$}
\rput(0,-.15)
 {\pcline[linewidth=0.8pt,linecolor=blue]{|<->|}(-1.,0)(1,0)}
\rput(7.1,0.3) {$I_{k}$}
\rput(6.6,-0.5) {$d_{k}$}
\rput(8.6,-0.5) {$d''_{s+1}$}
\rput(7.6,-0.5) {$d_{k+1}$}
\rput(5.6,-0.5) {$d''_{s}$}
\rput(5.1,0.7) {$I'_{s-1}$}
\rput(7.1,1) {$I'_{s}$}
\pscurve[linewidth=0.7pt](-2.0,0)(-1,0.35)(0,0)
\pscurve[linewidth=0.7pt](0,0)(1,0.35)(2,0)
\pscurve[linewidth=0.7pt](4.6,0)(5.1,0.3)(5.6,0)
\pscurve[linewidth=0.7pt](5.6,0)(7.1,0.7)(8.6,0)
\rput(0,-.15)
 {\pcline[linewidth=0.8pt,linecolor=blue]{|<->|}(6.6,0)(7.6,0)}\end{pspicture}
\caption{$I_k$ has at most two eigenvalues when $d''_{s}$ is in $I_k$ (left) and $I_k$ has at most one eigenvalue when $d''_{s}$ is not in $I_k$ (right)}\label{fig-lemma1}
\end{figure}
 \begin{lemma}  \label{interval-jk}
  Interval $I_{j,j+k}$ can have at least $k-1$ roots and at most $k+1$ roots for $j=1,2,\cdots,m-1$ and $k=1,2,\cdots,m-j$.
 \end{lemma}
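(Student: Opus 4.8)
The plan is to exploit the two alternative rank-one decompositions of $A$ given in (\ref{rar}), which exhibit $A$ (up to orthogonal conjugation) as a rank-one update of a diagonal matrix with diagonal entries $d''_1<d''_2<\dots<d''_{m-1}$ taken from $\{D'_1,D_3\}$ (equivalently $\{D_1,D'_2\}$). By the Cuppen interlacing property already recalled in the excerpt, there is exactly one eigenvalue of $A$ in each open interval $I'_s=(d''_s,d''_{s+1})$, and no eigenvalue equals any $d''_s$ after deflation; moreover exactly one eigenvalue lies to the left of $d''_1$ and none to the right of $d''_{m-1}$ (or the symmetric statement, depending on the sign of $\beta_2$). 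The key geometric observation, illustrated in Figure \ref{fig-lemma1}, is that each $d''_s$ lies strictly inside exactly one of the original intervals $I_k=(d_k,d_{k+1})$, because the $d''_s$ strictly interlace the $d_k$ as well (both come from rank-one perturbations of diagonal matrices whose combined diagonal is exactly the $d_q$). So the $m-1$ points $d''_1,\dots,d''_{m-1}$ are distributed among the $m$ slots $I_1,\dots,I_m$, with at most one per slot.

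First I would fix $j$ and $k$ and consider the union $I_{j,j+k}=(d_j,d_{j+1})\cup\dots\cup(d_{j+k-1},d_{j+k})$, which is the union of $k$ consecutive slots. Let $t$ be the number of points $d''_s$ that fall into these $k$ slots; since at most one point lies in each slot, $0\le t\le k$. These $t$ points subdivide the open set $I_{j,j+k}$ (minus the $t$ deleted points) into exactly $t+1$ maximal open pieces, but I must be careful at the two endpoints $d_j$ and $d_{j+k}$, which are themselves not interior to any of these $k$ slots and are excluded from the union. Each full interval $I'_s$ that is entirely contained in $I_{j,j+k}$ contributes its unique eigenvalue to $I_{j,j+k}$; the two "boundary" intervals $I'_s$ straddling $d_j$ and $d_{j+k}$ may or may not contribute, depending on where inside them the eigenvalue sits. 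Counting: if the $t$ points are $d''_{s_0+1}<\dots<d''_{s_0+t}$, then the intervals $I'_{s_0+1},\dots,I'_{s_0+t-1}$ lie inside $I_{j,j+k}$ and each donates one eigenvalue, giving at least $t-1$; the straddling intervals $I'_{s_0}$ and $I'_{s_0+t}$ give between $0$ and $2$ more, so $I_{j,j+k}$ contains between $t-1$ and $t+1$ eigenvalues. Since $t$ can be as large as $k$ (all slots occupied) and as small as... here is the subtle point: $t$ cannot be taken all the way down to $0$ in general, because the $m-1$ points are spread over $m$ slots, so at most one slot among $I_1,\dots,I_m$ is empty; hence among any $k$ consecutive slots at least $k-1$ are occupied, i.e. $t\ge k-1$. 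Combining, the eigenvalue count is at least $(k-1)-1=k-2$ and at most $k+1$. To sharpen the lower bound to $k-1$ as claimed, I would argue that when exactly one of the $k$ slots is empty (the worst case $t=k-1$), that empty slot is an honest-to-goodness subinterval $(d_i,d_{i+1})$ sitting strictly between two consecutive points $d''$, so the full interval $I'$ containing it lies inside $I_{j,j+k}$ and still contributes its eigenvalue; more carefully, $t=k-1$ already forces at least $t-1=k-2$ from the interior $I'$'s, and at least one of the two straddling intervals must contribute because a straddling $I'_s$ fails to contribute only if its eigenvalue lies outside $I_{j,j+k}$, and one checks this cannot happen at both ends simultaneously when an interior slot is empty.

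The key steps, in order: (i) establish that the $d''_s$ strictly interlace both the $d_q$ and that no eigenvalue of $A$ coincides with any $d''_s$ (from the deflation analysis and Cuppen); (ii) record that exactly one eigenvalue lies in each $I'_s$ plus one at an extreme end, so eigenvalues and the points $d''_s$ strictly interlace on the whole line; (iii) for fixed $j,k$, let $t=\#\{s:d''_s\in I_{j,j+k}\}$ and show $k-1\le t\le k$ using the pigeonhole distribution of $m-1$ points into $m$ slots; (iv) decompose the eigenvalue count over $I_{j,j+k}$ into contributions from interior $I'_s$'s (exactly $t-1$ of them, each giving one) and the two straddling $I'_s$'s (each giving $0$ or $1$), yielding the bounds $t-1$ to $t+1$; (v) combine (iii) and (iv) to get at least $k-1$ and at most $k+1$, handling the boundary bookkeeping to promote $k-2$ to $k-1$ in the extreme case.

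I expect the main obstacle to be step (v), the careful accounting at the two ends of $I_{j,j+k}$: the endpoints $d_j$ and $d_{j+k}$ are not among the $d''_s$, so the "first" and "last" relevant intervals $I'_s$ genuinely straddle them, and whether each contributes its eigenvalue to $I_{j,j+k}$ depends on the sign of $f$ (equivalently on $g^+,g^-$ from (\ref{sign-of-fp})--(\ref{sign-of-fm})) at $d_j$ and $d_{j+k}$. Showing that these two boundary contributions cannot both vanish precisely when $t=k-1$ — so that the lower bound is $k-1$ rather than $k-2$ — will require a short sign argument using the monotonicity structure of $f$ on each $I'_s$ and the fact that an empty slot sits strictly interior to some $I'$; alternatively, one can observe that if both straddling $I'$'s placed their eigenvalues outside $I_{j,j+k}$, then $I_{j,j+k}$ would contain a root-free subinterval forcing a spurious sign cancellation, contradicting the established interlacing. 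All other steps are routine interlacing bookkeeping once the strict interlacing of the $d''_s$ with the $d_q$ is in hand.
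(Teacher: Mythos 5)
Your overall strategy is the same as the paper's: pass to the rank-one re-decompositions in (\ref{rar}), use the Cuppen interlacing to get exactly one eigenvalue of $A$ in each $I'_s$, count how many of the $d''_s$ land in $I_{j,j+k}$, and let the two straddling intervals $I'_s$ contribute $0$--$2$ extra eigenvalues. However, your key geometric claim in step (i)--(iii) is false, and it is exactly the point where the real work lies. The $d''_s$ are \emph{not} all strictly interior to the slots $I_q=(d_q,d_{q+1})$: they consist of the eigenvalues of $B_i$ (which interlace only the diagonals of $B_i$, a proper subset of the $d_q$) together with the \emph{untouched} diagonals of $D_3$ (resp.\ $D_1$), and the latter are literally equal to certain $d_q$, i.e.\ they sit at slot endpoints, not inside slots. (Also, there are $m$ values $d''_1,\dots,d''_m$, not $m-1$.) Consequently your pigeonhole statement ``$m-1$ points into $m$ slots, at most one per slot, hence at most one empty slot, hence $t\ge k-1$'' does not apply; the paper instead case-splits on whether the endpoints $d_{j_0}$, $d_{j_0+k_0}$ are diagonals of $B_i$ or of the untouched block, and reads off directly from the CDC interlacing that either
$d''_{s-1}<d_{j_0}<d''_s<\cdots<d''_{s+k_0-1}<d_{j_0+k_0}<d''_{s+k_0}$ or the analogous chain with $d''_s=d_{j_0}$, which is what actually pins $t$ at $k_0$ or $k_0+1$.

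This matters for the lower bound: your accounting bottoms out at $t-1\ge k-2$, and the patch you propose to promote $k-2$ to $k-1$ (``the two straddling contributions cannot both vanish when an interior slot is empty'') is asserted, not proved, and rests on the faulty interlacing picture. In the paper's version no patch is needed, because the chain above already places $k_0-1$ (or $k_0$) full intervals $I'_s$ inside $(d_{j_0},d_{j_0+k_0})$, each carrying exactly one eigenvalue, so the lower bound $k_0-1$ falls out immediately. To repair your write-up you would need to replace steps (i)--(iii) by the correct description of the $d''_s$ (eigenvalues of $B_i$ interlacing the diagonals of $B_i$, plus the unperturbed diagonals of the remaining block) and then your step (iv) goes through essentially as in the paper.
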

 \begin{proof}
 Fix the index $j=j_0$ and $k=k_0$ so that the $I_{j_0,j_0+k_0}$ has $k_0+1$ diagonals including both ends.
 If $d_{j_0+k_0}$ is from $D_1$ or $D_2$, choose $B_i=B_1$.
 Otherwise, choose $B_i=B_2$. Assume that $B_i$ has $l$ diagonal entries. Depending on whether $d_{j_0}$ is from those $l$ diagonals or not, we can divide in into two cases.
 \begin{enumerate}
 \item Assume that $d_{j_0}$ is one of the diagonals of $B_i$.
 Then from (\ref{Bi}) we conclude by CDC algorithm that $B_i$ has exactly $l-1$ eigenvalues in $I_{j_0,j_0+k_0}$.
 Since each of $l-1$ eigenvalues is one of $d''$, there will be at least $(l-1)+(k_0+1-l)$ such $d''$s in $I_{j_0,j_0+k_0}$. ($k_0+1-l$ $d''$'s are from $D_3$ or $D_1$). Then we have
 \begin{equation} d''_{s-1} <d_{j_0}<d''_{s}<\cdots<d''_{s+k_0-1}<d_{j_0+k_0}<d''_{s+k_0} \end{equation} for some $s$.
 Since each $I'_q$ has exactly one eigenvalue of $A$ for $q=s-1,s,\cdots,s+k_0-1$, $I_{j_0,j_0+k_0}$ can have at least $k_0-1$ and at most $k_0+1$ eigenvalues of $A$ (see Figure \ref{fig-lemma1})

 \item Assume that $d_{j_0}$ is not one of the diagonals of $B_i$. (i.e., $d_{j_0}\in D_3$ or $d_{j_0}\in D_1$.)
 Then $B_i$ can have $l-1$ or $l$ eigenvalues in $I_{j_0,j_0+k_0}$ by CDC algorithm.
 Since each eigenvalues of $B_i$ is one of $d''$, there will be $(l-1)+(k_0+1-l)$ or $l+(k_0+1-l)$ such $d''$s in $I_{j_0,j_0+k_0}$ including $d_{j_0}$. Then we have
$$\left\{
  \begin{array}{l l}
  d''_{s} =d_{j_0}<d''_{s+1}<\cdots<d''_{s+k_0-1}<d_{j_0+k_0}<d''_{s+k_0} &\, \text{if $k_0$ such $d''$s}\\
  d''_{s} =d_{j_0}<d''_{s+1}<\cdots<d''_{s+k_0}<d_{j_0+k_0}<d''_{s+k_0+1} &\,  \text{if $k_0+1$ such $d''$s}
  \end{array} \right.
  $$for some $s$.
 Then $I_{j_0,j_0+k_0}$ can have $k_0-1$ or $k_0$ eigenvalues of $A$ for the first case and  can have $k_0$ or $k_0+1$ eigenvalues of $A$ for the second case.
 \end{enumerate}
 In both cases  $I_{j_0,j_0+k_0}$ has $k_0-1$ or $k_0$ or $k_0+1$ eigenvalues of $A$, which completes the proof
  of Lemma.
  \end{proof}
From this Lemma, we can say that there can be at most two eigenvalues of $A$ in each interval $I_j, j=1,\cdots,m-1$.
Let us classify every intervals into two groups $S^{+}$ and $S^{-}$ as follows:

$$ \left\{
  \begin{array}{l l}
     I_{jk} \in S^{+}, & \quad \text{if $g^+_j g^-_k>0$}\\
  I_{jk} \in S^{-}, & \quad \text{if $g^+_j g^-_k<0$}
  \end{array}. \right.
$$
 Since every interval can have at most two roots by Lemma \ref{interval-jk} and the secular equation is continuous on every interval $I_j$, we can conclude that every interval in $S^-$ has one root by intermediate value theorem.
 Clearly, every interval $I_j$ in $S^+$ has two roots or no root.
 By using mathematical induction, we can prove the following lemma.

  \begin{lemma}  \label{interval-s}
  For any interval $I_{j,j+k}$ in $S^{-}$ has exactly $k$ roots where $j=1,2,\dots,m-1$ and $k\leq m-j$. Also, for any interval $I_{j,j+k}$ in $S^{+}$ has $k-1$ or $k+1$ roots where $j=1,2,\dots,m-1$ and $k\leq m-j$.
  \end{lemma}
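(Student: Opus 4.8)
The plan is induction on the width $k$ of $I_{j,j+k}$. First I would record what the sign data of (\ref{sign-of-fp})--(\ref{sign-of-fm}) say about $f$ directly. Throughout this subsection the surviving diagonals $d_1<\dots<d_m$ are pairwise distinct, so every $d_i$ is simple; then $g^-_i=-g^+_i$, and by construction the one-sided limits of $f$ satisfy
\[
\mathrm{sgn}\Big(\lim_{\lambda\to d_i^+}f(\lambda)\Big)=\mathrm{sgn}(g^+_i),\qquad
\mathrm{sgn}\Big(\lim_{\lambda\to d_i^-}f(\lambda)\Big)=\mathrm{sgn}(g^-_i).
\]
Hence $I_{j,j+k}\in S^-$ precisely when $f$ changes sign between $d_j^+$ and $d_{j+k}^-$, and $I_{j,j+k}\in S^+$ when it does not. (If $g^+_i=0$ one reads the sign of the one-sided limit off the next nonvanishing term; I would treat this as a genericity assumption or remove it by a perturbation argument.) The base case $k=1$ is the observation stated just before the Lemma: on the single open subinterval $I_j=(d_j,d_{j+1})$ the function $f$ is real-analytic and, by Lemma \ref{interval-jk}, has at most two zeros there, so counting zeros with multiplicity and applying the intermediate value theorem gives exactly one root when $I_j\in S^-$ and an even number --- hence $0$ or $2$ --- when $I_j\in S^+$.

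For the inductive step, assume the Lemma for every admissible interval of width at most $k$ and let $I_{j,j+k+1}$ be admissible. Decompose it as $I_{j,j+k+1}=I_{j,j+k}\cup\{d_{j+k}\}\cup I_{j+k,j+k+1}$. Since $d_{j+k}$ is a surviving diagonal of $D$, it is not an eigenvalue of $A$ after deflation, so the root count is additive:
\[
N(I_{j,j+k+1})=N(I_{j,j+k})+N(I_{j+k,j+k+1}),
\]
where $N(\cdot)$ is the number of roots of $f$ (equivalently, eigenvalues of $A$) in the interval. Using $g^-_{j+k}=-g^+_{j+k}$ one obtains
\[
\mathrm{sgn}\big(g^+_j g^-_{j+k+1}\big)=-\,\mathrm{sgn}\big(g^+_j g^-_{j+k}\big)\cdot\mathrm{sgn}\big(g^+_{j+k} g^-_{j+k+1}\big),
\]
so the $S^\pm$-membership of $I_{j,j+k+1}$ is fixed by that of its two pieces $I_{j,j+k}$ (width $k$) and $I_{j+k,j+k+1}$ (width $1$). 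Substituting the inductive counts --- $k$ if the width-$k$ piece lies in $S^-$, else $k-1$ or $k+1$; $1$ or respectively $0$/$2$ for the width-$1$ piece --- closes three of the four sign combinations immediately: $(S^-,S^-)$ gives a piece in $S^-$ with count $k+1$, while $(S^-,S^+)$ and $(S^+,S^-)$ each give a piece in $S^+$ with count in $\{k,k+2\}=\{(k+1)-1,(k+1)+1\}$.

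The remaining combination $(S^+,S^+)$ is the crux: here $I_{j,j+k+1}\in S^-$, but additivity alone only gives $N(I_{j,j+k+1})\in\{k-1,k+1,k+3\}$, whereas we must show it equals exactly $k+1$. To finish I would apply Lemma \ref{interval-jk} to $I_{j,j+k+1}$ itself, which bounds $N(I_{j,j+k+1})$ between $k$ and $k+2$; intersecting with $\{k-1,k+1,k+3\}$ forces $N(I_{j,j+k+1})=k+1$. This is the one point where the quantitative bounds of Lemma \ref{interval-jk}, rather than a parity argument, are indispensable. Everything else is routine: verifying the displayed sign identity, checking the four case combinations, and noting that admissibility of the sub-intervals used in the induction follows at once from $1\le j\le m-1$ and $k+1\le m-j$.
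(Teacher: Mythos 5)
Your proof is correct and follows essentially the same route as the paper's: induction on the width $k$, additivity of root counts across the surviving diagonal $d_{j+k}$, a four-way case split on the $S^\pm$ membership of the width-$k$ and width-$1$ pieces, and the decisive appeal to Lemma \ref{interval-jk} in the $(S^+,S^+)$ case to cut $\{k-1,k+1,k+3\}$ down to $\{k+1\}$. The only differences are cosmetic: you make explicit the sign identity $\mathrm{sgn}(g^+_j g^-_{j+k+1})=-\,\mathrm{sgn}(g^+_j g^-_{j+k})\,\mathrm{sgn}(g^+_{j+k} g^-_{j+k+1})$ that the paper leaves implicit, and you flag the degenerate case $g^+_i=0$, which the paper does not discuss.
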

 \begin{proof}
We have already seen that each interval $I_j$ satisfies the   lemma.
Next, fix $j=j_0$ and suppose that the statements of the  lemma holds when $k=k_0$.
\begin{enumerate}
\item Assume $I_{j_0,j_0+k_0} \in S^-$ and $I_{j_0+k_0}\in S^-$. Since $g^+_{j_0+k_0}$ and $g^-_{j_0+k_0}$ has opposite sign, \\$I_{j_0,j_0+k_0+1} \in S^-$.
By induction, there will be $k_0$ roots in $I_{j_0,j_0+k_0}$ and one root in $I_{j_0+k_0}$.
Therefore, $I_{j_0,j_0+k_0+1}$ will have $k_0+1$ roots and $k=k_0+1$ satisfies the lemma.
\item Assume $I_{j_0,j_0+k_0} \in S^-$ and $I_{j_0+k_0}\in S^+$.
Then we have $I_{j_0,j_0+k_0+1} \in S^+$.
By induction, there will be $k_0$ roots in $I_{j_0,j_0+k_0}$ and two or no roots in $I_{j_0+k_0}$.
Therefore, $I_{j_0,j_0+k_0+1}$ will have $k_0$ or $k_0+2$ roots and $k=k_0+1$ satisfies the lemma.
\item Assume $I_{j_0,j_0+k_0} \in S^+$ and $I_{j_0+k_0}\in S^-$.
Then we have $I_{j_0,j_0+k_0+1} \in S^+$.
By induction, there will be $k_0-1$ or $k_0+1$ roots in $I_{j_0,j_0+k_0}$ and one root in $I_{j_0+k_0}$.
Therefore, $I_{j_0,j_0+k_0+1}$ will have $k_0$ or $k_0+2$ roots and $k=k_0+1$ satisfies the lemma.
\item Assume $I_{j_0,j_0+k_0} \in S^+$ and $I_{j_0+k_0}\in S^+$.
Then we have  $I_{j_0,j_0+k_0+1} \in S^-$.
By induction,  there will be $k_0-1$ or $k_0+1$ roots in $I_{j_0,j_0+k_0}$ and two or no roots in $I_{j_0+k_0}$.
Then $I_{j_0,j_0+k_0+1}$ can have $k_0-1$ or $k_0+1$ or $k_0+3$ roots.
However, this interval only can have $k_0+1$ roots by Lemma \ref{interval-jk} and $k=k_0+1$ satisfies the lemma.
\end{enumerate}
Hence the statements of the  lemma holds when $k=k_0+1$.
 \end{proof}
Since the secular equation can have at most $n$ roots, from Lemma \ref{interval-jk} we see that $I_0\cup I_m$ can have at most two roots.
 \renewcommand{\labelenumi}{(\arabic{enumi})}

 \begin{theorem}  \label{thm-num-of-roots}
 Number of roots for each interval is determined by the sign of $g_i^-$ for $i=1,2,\dots,m$.
  \end{theorem}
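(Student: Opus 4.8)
The plan is to push the sign list $\sigma_i:=\mathrm{sign}(g_i^-)$, $i=1,\dots,m$, through to a count of roots of $f$ in each of the intervals $I_0,I_1,\dots,I_m$, using the sign of $f$ near the poles, the localization Lemmas \ref{interval-jk}--\ref{interval-s}, and a global count.

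First I would record that, since every surviving $d_i$ is simple, \eqref{sign-of-fp}--\eqref{sign-of-fm} give $g_i^+=-g_i^-$, so $\lim_{\lambda\to d_i^+}f$ and $\lim_{\lambda\to d_i^-}f$ have opposite signs, both fixed by $\sigma_i$. Hence, for $j<k$, $I_{jk}\in S^-$ exactly when $\sigma_j=\sigma_k$ and $I_{jk}\in S^+$ exactly when $\sigma_j\ne\sigma_k$; in particular the $S^\pm$-status of each short interval $I_j$ ($1\le j\le m-1$) is simply whether $\sigma_j=\sigma_{j+1}$. For $I_j\in S^-$ the sign of $f$ flips across $I_j$, so by the intermediate value theorem together with Lemma \ref{interval-jk} (two roots would restore the sign) it carries exactly one root; for $I_j\in S^+$ the sign does not flip, so it carries $0$ or $2$ roots.

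It remains to resolve the $0$-versus-$2$ alternative and to count $I_0$ and $I_m$. For the two outermost intervals I would use that $f\to 1>0$ at the far ends, together with $\mathrm{sign}(\lim_{\lambda\to d_1^-}f)=\sigma_1$ and $\mathrm{sign}(\lim_{\lambda\to d_m^+}f)=-\sigma_m$, to fix the parities of their root counts; this, with the remark that $I_0\cup I_m$ holds at most two roots, determines them once the total is known. Then I would group the $S^+$ short intervals into maximal consecutive blocks $I_a,\dots,I_b$; along such a block $\sigma_a,\dots,\sigma_{b+1}$ alternate, so every even-length prefix $I_{a,a+2t}$ is an $S^-$ composite interval and hence, by Lemma \ref{interval-s}, carries exactly $2t$ roots, while every odd-length one carries $2t$ or $2t+2$. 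Feeding these exact composite counts, the already-known counts of the neighbouring $S^-$ intervals, and the global total of $m$ roots into the length-induction bookkeeping used in the proof of Lemma \ref{interval-s} forces the count in every member of the block, and the outer count then closes the budget at $m$.

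The main obstacle is precisely this last step. In isolation an $S^+$ interval may hold either $0$ or $2$ roots, and its two adjacent signs do not decide which; the decision has to be squeezed out of the tightness of Lemma \ref{interval-s} on the $S^-$ composite intervals together with the requirement that the counts sum to $m$, and wherever a sign pattern still leaves slack one must descend to one of the rank-one auxiliary problems $R_i^T A R_i$ from the proof of Lemma \ref{interval-jk}, whose interlacing pins the two candidate roots on either side of the pole that separates the block. Making this reduction uniform over all sign patterns, rather than handling it case by case, is the delicate part of the argument.
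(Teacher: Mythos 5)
Your reduction of the $S^{\pm}$ classification to the sign list $\sigma_i=\mathrm{sign}(g_i^-)$ is sound (indeed $g_i^+=-g_i^-$ for simple entries), and your treatment of the $S^-$ intervals by the intermediate value theorem, as well as the parity argument for $I_0$ and $I_m$, agrees with the paper. The gap is in the interior $S^+$ intervals. Within a maximal block $I_a,\dots,I_b$ of consecutive $S^+$ intervals, Lemma \ref{interval-s} applied to the even-length composite prefixes does force the per-interval counts to alternate $2,0,2,\dots$ or $0,2,0,\dots$, but it cannot decide which of the two alternating patterns occurs; the global total of $m$ roots supplies only a single scalar constraint, so as soon as there is more than one such block (or one block plus the ambiguous outer case below) the bookkeeping underdetermines the answer. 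Your proposed fallback---descending to the rank-one problems $R_i^TAR_i$---does not close this: the interlacing there is expressed through the intermediate eigenvalues $d''_s$, whose positions relative to the $d_j$ are exactly what Lemma \ref{interval-jk} already extracted (at most two roots per $I_j$), and pinning them down further means solving another secular equation, not reading off signs. The paper decides each interior $S^+$ interval by a direct analytic test you never invoke: locate the critical point $\lambda_0\in I_j$ with $f'(\lambda_0)=0$ and compare the sign of $f(\lambda_0)$ with the common sign of $f(d_j^+)$ and $f(d_{j+1}^-)$; equal sign gives no root, opposite sign gives two.

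You also omit the resolution of the one genuinely ambiguous outer configuration, $g_1^+<0$ and $g_m^-<0$, where both $I_0$ and $I_m$ have even parity and together hold at most two roots, so ``closing the budget at $m$'' leaves the alternatives $(2,0)$, $(0,2)$, $(0,0)$ unresolved. The paper breaks this tie using the signs of $\beta_1$ and $\beta_2$ together with the CDC fact that a rank-one update with positive (resp.\ negative) $\beta$ places a root in the rightmost (resp.\ leftmost) end interval of the auxiliary problem (\ref{Bi}): both $\beta$'s positive forces the two roots into $I_m$, both negative forces them into $I_0$, and mixed signs force $(0,0)$. Without the critical-point test and this $\beta$-sign argument, the count is not actually determined by your scheme.
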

 \begin{proof}
 For each interval $I_j$, $j=1,2,\dots m-1$, we can determine the exact number of roots by the following:
 If $I_j \in S^-$, there will be only one root in $I_j$.
 If $I_j \in S^+$, there can be at most two eigenvalues by Lemma \ref{interval-jk}. We can determine the exact number of roots by evaluating the sign of $f(\lambda_0)$ where $\lambda_0$ is a solution of $f'(\lambda)=0$.
 If the sign of $f(\lambda_0)$ is the same as those of $f(d_j^+)$ and $f(d_{j+1}^-)$, there will be no root.
 On the other hand, if the sign of $f(\lambda_0)$ is  the opposite of $f(d_j^+)$ and $f(d_{j+1}^-)$, there will be two roots in $I_j$.
 Therefore, we know the number of roots in $I_{1m}$ since $I_{1m}=I_{1}\cup \cdots I_{m-1}$.
 Now, we determine the number of roots in $I_0$ and $I_m$.
  Depending on the sign of $f(d_1^+)$ and $f(d_m^-)$, we can divide it into four cases as follows:
  \begin{enumerate}
  \item If $g_1^+>0$, $g_m^->0$, we see $I_{1m}\in S^+$ and $g_m^+<0$.
  Since $\lim_{\lambda \rightarrow \infty} f(\lambda)=1$, there exists one root in $I_m$.
  Since $I_{1m}$ can have $m-2$ or $m$ roots by Lemma \ref{interval-s},  $I_{1m}$ has $m-2$ roots.
  Therefore, $I_0$ has one root.
  \item If $g_1^+<0$, $g_m^->0$, we see $I_{1m}\in S^-$ and $g_m^+<0$.   Since $\lim_{\lambda \rightarrow \infty} f(\lambda)=1$, there exists one root in $I_m$.
   Since $I_{1m}$ has $m-1$ roots by Lemma \ref{interval-s}, $I_0$ has no root.

  \item If $g_1^+>0$, $g_m^-<0$, we see $I_{1m}\in S^-$ and $g_m^+>0$.  Again by the fact  $\lim_{\lambda \rightarrow \infty} f(\lambda)=1$ and  $g_m^+>0$,  $I_m$ can have two or zero roots.  Since $I_{1m}$ has $m-1$ roots by Lemma \ref{interval-s},  $I_m$ cannot have two roots. Hence  $I_m$  has no root and  $I_0$ must  have one root.

  \item If $g_1^+<0$, $g_m^-<0$, we see $I_{1m}\in S^+$ and $g_m^+>0$.
  Then there can be $m-2$ or $m$ roots in $I_{1m}$ by Lemma \ref{interval-s}.
  Since $I_m$ belongs to $S^+$, there will be two or no roots in $I_m$.
  Hence, by counting, we see $I_0$ also has two or no roots. Depending on the sign of $\beta_1$ and $\beta_2$, this is classified into the following cases:

  \begin{enumerate}
  \item Assume $\beta_1>0$ and $\beta_2>0$. If $d_{m}$ is from $D_1$ or $D_2$, choose $B_i=B_1$.
 Otherwise, choose $B_i=B_2$. Applying the result of CDC to the form (\ref{Bi}), we see $d_{m}<d'_{i,r}$ for some $r$.  It is well known that when we apply the CDC algorithm for (\ref{rank-one-mod}) then there is a root at the left end interval when  $\beta$
  is negative and there is a root at right end interval when  $\beta$ is positive.
 Applying this result of CDC to the form (\ref{Bi}), we see there is at least one root in the interval $(d'_{i,r},d_{m+1})$. This root belongs to $I_m$.  Since $I_m$ can have two or no roots, we see that
  $I_m$ has $2$ roots and hence $I_{1m}$ has $m-2$ roots.
  \item Assume $\beta_1<0$ and $\beta_2<0$. Arguing exactly the same way as in case (a), we conclude that the interval $I_0$ has at least one root. This implies that $I_0$ has $2$ roots and $I_{1m}$ has $m-2$ roots.
  \item Assume $\beta_1\beta_2<0$. Similarly, we can see that neither $I_0$ nor $I_m$ has two roots. This implies that $I_{1m}$ has $m$ roots and there is no root in $I_0$ and $I_m$.
  \end{enumerate}
  \end{enumerate}
 \end{proof}

\begin{figure}
\begin{pspicture}(-2.5,-9)(12.,3) \scriptsize
\psset{xunit=0.7, yunit=0.6}
\psset{linewidth=.5\pslinewidth}

\scriptsize
\rput (2.74,0.3) {$\cdots \cdots \cdots $}
  \rput (1,0.3) {\psframebox*{$d_1 $}}
  \rput (2,0.3) {\psframebox*{$d_2 $}}
  \rput (3.5,0.3) {\psframebox*{$d_{n-1} $}}
  \rput (4.5,0.3) {\psframebox*{$d_{n} $}}
  \rput (2.85,2) {\psframebox*{\large$\cdots$}}
\psaxes[ticks=none,labels=none]{->}(0,0)(-0.3,-4.05)(6.7,4.05)
\psline[linestyle=dashed] (1,-4.0)(1,4.) \psline[linestyle=dashed] (2,-4.0)(2,4.)  \psline[linestyle=dashed] (3.5,-4.0)(3.5,4.)  \psline[linestyle=dashed] (4.5,-4.0)(4.5,4.)
\psplot[linewidth=0.5\pslinewidth, plotpoints=200]{0.195}{0.81}{ 2 x mul 1 sub
 x dup mul x sub div}
\psplot[linewidth=0.5\pslinewidth, plotpoints=200]{1.13}{1.87}{-1 x 1 sub x 2 sub mul div
 5 sub}
 \psplot[linewidth=0.5\pslinewidth, plotpoints=200]{3.672}{4.328}{-1 x 3.5 sub x 4.5 sub mul div
 3 sub}
\psplot[linewidth=0.5\pslinewidth, plotpoints=200]{4.65}{6.7}{-1 x 4.45 sub  div 1 add}
\scriptsize
\rput(3,-5){Typical case of (1)}
\rput(12,-5){Typical case of (2)}
\rput(3,-15){Typical case of (3)}
\rput(12,-15){Typical case of (4)-(b)}
\rput(9,0){\psset{linewidth=.5\pslinewidth}

 \rput (2.74,0.3) {$\cdots \cdots \cdots $}
  \rput (1,0.3) {\psframebox*{$d_1 $}}
  \rput (2,0.3) {\psframebox*{$d_2 $}}
  \rput (3.5,0.3) {\psframebox*{$d_{n-1} $}}
  \rput (4.5,0.3) {\psframebox*{$d_{n} $}}
  \rput (2.85,2) {\psframebox*{\large$\cdots$}}
  \psaxes[ticks=none,labels=none]{->}(0,0)(-0.3,-4.05)(6.7,4.05)
\psline[linestyle=dashed] (1,-4.0)(1,4.) \psline[linestyle=dashed] (2,-4.0)(2,4.)  \psline[linestyle=dashed] (3.5,-4.0)(3.5,4.)  \psline[linestyle=dashed] (4.5,-4.0)(4.5,4.)
\psplot[linewidth=0.5\pslinewidth, plotpoints=200]{0.172}{0.828}{-1 x  x 1 sub mul div
 3 sub}
 \psplot[linewidth=0.5\pslinewidth, plotpoints=200]{3.63}{4.37}{-1 x 3.5 sub x 4.5 sub mul div
 5 sub}
\psplot[linewidth=0.5\pslinewidth, plotpoints=200]{1.195}{1.81}{-1 x 1 sub div -1 x 2 sub div add}

  \psplot[linewidth=0.5\pslinewidth, plotpoints=200]{4.65}{6.7}{-1 x 4.45 sub  div 1 add}
  }

  \rput(0,-10){\psset{linewidth=.5\pslinewidth}
\scriptsize
  \rput (2.74,0.3) {$\cdots \cdots \cdots $}
  \rput (1,0.3) {\psframebox*{$d_1 $}}
  \rput (2,0.3) {\psframebox*{$d_2 $}}
  \rput (3.5,0.3) {\psframebox*{$d_{n-1} $}}
  \rput (4.5,0.3) {\psframebox*{$d_{n} $}}
  \rput (2.85,2) {\psframebox*{\large$\cdots$}}
\psaxes[ticks=none,labels=none]{->}(0,0)(-0.3,-4.05)(6.7,4.05)
\psline[linestyle=dashed] (1,-4.0)(1,4.) \psline[linestyle=dashed] (2,-4.0)(2,4.)  \psline[linestyle=dashed] (3.5,-4.0)(3.5,4.)  \psline[linestyle=dashed] (4.5,-4.0)(4.5,4.)
\psplot[linewidth=0.5\pslinewidth, plotpoints=200]{0.195}{0.81}{ 2 x mul 1 sub
 x dup mul x sub div}
\psplot[linewidth=0.5\pslinewidth, plotpoints=200]{1.13}{1.87}{-1 x 1 sub x 2 sub mul div
 5 sub}
 \psplot[linewidth=0.5\pslinewidth, plotpoints=200]{3.695}{4.31}{1 x 3.5 sub div 1 x 4.5 sub div add}

\psplot[linewidth=0.5\pslinewidth, plotpoints=200]{4.7}{6.7}{1 x 4.45 sub  div }
  }

  \rput(9,-10){\psset{linewidth=.5\pslinewidth}

\rput (2.74,0.3) {$\cdots \cdots \cdots $}
  \rput (1,0.3) {\psframebox*{$d_1 $}}
  \rput (2,0.3) {\psframebox*{$d_2 $}}
  \rput (3.5,0.3) {\psframebox*{$d_{n-1} $}}
  \rput (4.5,0.3) {\psframebox*{$d_{n} $}}
  \rput (2.85,2) {\psframebox*{\large$\cdots$}
   }
  \psaxes[ticks=none,labels=none]{->}(0,0)(-0.3,-4.05)(6.7,4.05)
\psline[linestyle=dashed] (1,-4.0)(1,4.) \psline[linestyle=dashed] (2,-4.0)(2,4.)  \psline[linestyle=dashed] (3.5,-4.0)(3.5,4.)  \psline[linestyle=dashed] (4.5,-4.0)(4.5,4.)
\psplot[linewidth=0.5\pslinewidth, plotpoints=200]{0.13}{0.87}{-1 x  x 1 sub mul div
 5 sub}
\psplot[linewidth=0.5\pslinewidth, plotpoints=200]{3.695}{4.31}{1 x 3.5 sub div 1 x 4.5 sub div add}
\psplot[linewidth=0.5\pslinewidth, plotpoints=200]{1.195}{1.81}{-1 x 1 sub div -1 x 2 sub div add}

  \psplot[linewidth=0.5\pslinewidth, plotpoints=200]{4.7}{6.7}{1 x 4.45 sub  div }

    }
  \end{pspicture}
  \caption{Typical examples of Theorem \ref{thm-num-of-roots}}
\end{figure}
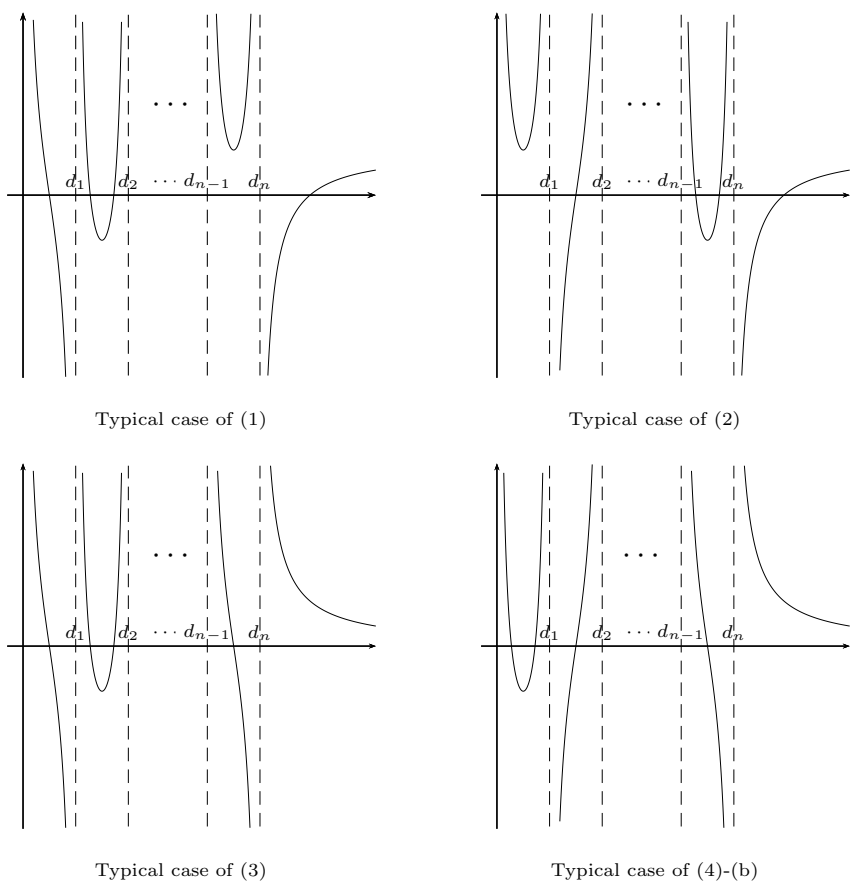
\subsection{Cases with multiple diagonal entries} \label{mdc}
For the multiple diagonal case, the results of previous section can be applied in a similar manner.
Suppose that $d_i=d_{i+1}$ for some $i$. From (\ref{sign-of-fp}) and (\ref{sign-of-fm}), we see  the sign of
 $g_i^+$ and $g_i^-$ are the same.

Again by applying the idea used in the proof of Lemma \ref{interval-jk} and \ref{interval-s}, we see
that there should be one more root in one of $I_{i-1,i}$ or $I_{i+1,i+2}$.

\section{Eigenvector} Since  deflation steps were treated in Section 3, it suffices to
 calculate eigenvectors  using equation (\ref{eigenvector2}). From  (\ref{equation1})  we have
 \begin{equation} \label{equa-eigenv}
 \begin{pmatrix}
  -1+\beta_1 c_1 & \beta_1 c_3\\
\beta_2 c_3 & -1+\beta_2 c_2 \end{pmatrix} \begin{pmatrix}
  a\\ b \end{pmatrix} =\b0.
\end{equation} Since this system has a nontrivial solution, the determinant of the  matrix must be zero, which is nothing but (\ref{pre-secular}).
Eigenvectors corresponding to simple eigenvalues can be obtained by substituting the eigenvalues into  (\ref{equation1}) to solve for the ratio of $a$ to $b$.
For the case of multiple(double) eigenvalues, two linearly independent solutions exist. Hence all the
 entries of the matrix vanish. So any two nonzero numbers $a,b$ are solutions.
Hence we can take $\bu_1$ and $\bu_2$  as the corresponding eigenvectors.

In practice, the eigenvectors computed this way lose orthogonality as in the original Cuppen's divide and conquer algorithm when eigenvalues of $A$ are close to those of $T_i$. To fix these problems, we try two methods:
\subsection{Method 1 - Calculating $\hat{\bv_1}, \hat{\bv_2}$ corresponding to the computed eigenvalues}\label{fixing}
This is a natural extension of Gu and Eisenstat \cite{Gu} fixing the orthogonality problem to the case of rank two modifications.
 However, as it turns out, we run into the lack of equations to find $\bv_1$ and $\bv_2$.  
To see why, first we need the following Lemma which can be found in the
standard textbook.
\begin{lemma} We have
 \begin{enumerate}
\item  $det(A+\beta\bv\bv^T)= det (A)(1+ \beta\bv^T A^{-1}\bv) $
\item  $(A+\beta\bv\bv^T)^{-1}= A^{-1}- \frac{\beta A^{-1} \bv \bv^T A^{-1}}{1+\beta \bv^TA^{-1}\bv} $.
\end{enumerate}
\end{lemma}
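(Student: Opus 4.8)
The plan is to prove both identities by the standard route: reduce everything to a rank-one perturbation of the identity and then invoke a bordered ($2\times2$ block) determinant computation. The lemma is used only when $A+\beta\bv\bv^T$ is invertible, so I will work under that hypothesis and write $\sigma:=1+\beta\bv^TA^{-1}\bv$ for the scalar appearing in the statement; part (1) will in particular show $\sigma\ne0$.

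First I would record the elementary identity $\det(I+\bx\by^T)=1+\by^T\bx$ for column vectors $\bx,\by$ of equal length. To get it, form the bordered matrix
\[
M=\begin{pmatrix} I & \bx\\ -\by^T & 1\end{pmatrix}
\]
and evaluate $\det M$ in two ways by Schur complements: eliminating the leading block $I$ gives $\det M=\det(I)\bigl(1-(-\by^T)I^{-1}\bx\bigr)=1+\by^T\bx$, while eliminating the trailing scalar $1$ gives $\det M=1\cdot\det\bigl(I-\bx\,1^{-1}(-\by^T)\bigr)=\det(I+\bx\by^T)$. Equating the two expressions proves the identity.

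For part (1), I would factor $A+\beta\bv\bv^T=A\,(I+\beta A^{-1}\bv\bv^T)$, so that $\det(A+\beta\bv\bv^T)=\det(A)\,\det(I+\beta A^{-1}\bv\bv^T)$, and then apply the elementary identity with $\bx=\beta A^{-1}\bv$ and $\by=\bv$ to obtain $\det(I+\beta A^{-1}\bv\bv^T)=1+\beta\bv^TA^{-1}\bv=\sigma$; this is exactly the claimed formula, and in particular $\sigma\ne0$ since the left-hand side is nonzero. For part (2) I would simply verify that $B:=A^{-1}-\sigma^{-1}\beta A^{-1}\bv\bv^TA^{-1}$ is a two-sided inverse of $A+\beta\bv\bv^T$. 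Expanding $(A+\beta\bv\bv^T)B$ and collecting all the terms of the form $\beta\bv\bv^TA^{-1}$, the scalar coefficient comes out to $1-\sigma^{-1}-\sigma^{-1}\beta\bv^TA^{-1}\bv=1-\sigma^{-1}\sigma=0$, so that $(A+\beta\bv\bv^T)B=I$; the check that $B(A+\beta\bv\bv^T)=I$ is word for word the same.

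There is no genuine obstacle in this lemma — it is textbook material (the matrix determinant lemma and the Sherman--Morrison formula), and the only point that needs a moment's attention is the nonvanishing of $\sigma$, which is handed to us by part (1) in every context where the lemma is invoked; everything else is routine bookkeeping.
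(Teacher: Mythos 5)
Your proof is correct and complete: the bordered-matrix/Schur-complement derivation of $\det(I+\bx\by^T)=1+\by^T\bx$, the factorization $A+\beta\bv\bv^T=A(I+\beta A^{-1}\bv\bv^T)$ for part (1), and the direct two-sided verification of the Sherman--Morrison formula for part (2) are all standard and carried out without error, including the key cancellation $1-\sigma^{-1}-\sigma^{-1}\beta\bv^TA^{-1}\bv=0$. The paper itself gives no proof of this lemma, citing it as textbook material, so there is nothing to compare against; your write-up simply supplies the standard argument the authors omitted.
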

 Suppose that we could find two vectors $\hat{\bv_1}, \hat{\bv_2}$  such that $\{\hat\lambda_i\}$ are the exact eigenvalues of the new rank-two modification matrix $\hat D+\beta_1\hat\bv_1\hat\bv_1^T+\beta_2\hat\bv_2\hat\bv_2^T$.
 Proceeding as in Section 3 of \cite{Gu} we are led to investigate the term $\prod_{i=1}^n (\lambda_i - \mu)$.
Using the Lemma, we have
$$
\begin{array}{l l}
&\displaystyle\prod_{i=1}^n (\lambda_i - \mu)= det(D-\mu I + \beta_1 \bv_1\bv_1^T + \beta_2 \bv_2 \bv_2^T) \\
&=det(D-\mu I + \beta_1 \bv_1\bv_1^T )(1+\beta_2 \bv_2^T(D-\mu I + \beta_1 \bv_1\bv_1^T)^{-1} \bv_2)\\
&= det(D-\mu I) (1+ \beta_1 \bv_1^T(D-\mu I)^{-1}\bv_1 )(1+\beta_2 \bv_2^T(D-\mu I + \beta_1 \bv_1\bv_1^T)^{-1} \bv_2)\\
&= det(D-\mu I) (1+ \beta_1 \bv_1^T(D-\mu I)^{-1}\bv_1 )\left(1+\beta_2 \bv_2^T\left\{(D-\mu I)^{-1}
- \frac{\beta_1 (D-\mu I)^{-1} \bv_1 \bv_1^T (D-\mu I)^{-1}}{1+\beta_1 \bv_1^T (D-\mu I)^{-1} \bv_1}\right\} \bv_2\right)\\
&=det(D-\mu I)\left(1+\displaystyle\sum_{q=1}^n \frac{\beta_1v_{1q}^2+\beta_2v_{2q}^2}{d_q-\mu }+\beta_1\beta_2\displaystyle\sum_{q=1}^n \frac{v_{1q}^2}{d_q-\mu}\displaystyle\sum_{q=1}^n \frac{v_{2q}^2}{d_q-\mu}-\beta_1\beta_2\left(\displaystyle\sum_{q=1}^n \frac{v_{1q}v_{2q}}{d_q-\mu}\right)^2\right)\\
&=\displaystyle\prod_{i=1}^n (d_i - \mu) \left(1 +\sum_{q=1}^n \frac{\beta_1v_{1q}^2+\beta_2v_{2q}^2}{d_q-\mu}+\beta_1\beta_2\displaystyle\sum_{q=1}^n \sum_{r= q+1}^n\frac{(v_{1q}v_{2r}-v_{1r}v_{2q})^2}{( d_q-\mu)(d_r-\mu)}\right)\\
&=\displaystyle\prod_{i=1}^n (d_i - \mu)  +\sum_{q=1}^n  \displaystyle\prod_{\substack{i=1 \\i\ne q}}^n (d_i - \mu) (\beta_1v_{1q}^2+\beta_2v_{2q}^2)  +\beta_1\beta_2\displaystyle\sum_{q=1}^n \sum_{r= q+1}^n    \displaystyle\prod_{\substack{i=1 \\i\ne q,r}}^n (d_i - \mu)(v_{1q}v_{2r}-v_{1r}v_{2q})^2 .
\end{array}
$$
Setting $\mu=d_k$, $k=1,\cdots,n$, we obtain relatively simple equations. However, unlike the rank one modification, 
it is obvious that we cannot solve these equations for $\hat{\bv_1}$ and $\hat{\bv_2}$. We tried some ad-hoc method to compute approximations of $\hat{\bv_1}$ and $\hat{\bv_2}$. The advantage of this method is that it requires only $O(n^2)$ operations. Since this approximation leads the orthogonality problem, the results were not so satisfactory for $n\ge 80$.

\subsection{Method 2 - Repeated applications of  Gu and Eisenstat}
 We apply the methods used in Gu and Eisenstat\cite{Gu} by regarding our decomposition as a repeated rank one modifications.
\begin{enumerate}
\item Apply the Gu's method to $B_1$ of (\ref{Bi}) (resp. $B_2$) then
\item apply the Gu's method to the first expression of $A$ in (\ref{A}) (resp. second expression of $A$).
\end{enumerate}
Although this method needs $O(n^3)$ operations, the orthogonality problem does not arise.
Hence, we can apply this algorithm in a recursive way.

\section{Numerical results}
In this section we compare the execution time and accuracy measures between CDC and our rank two modification divide and conquer(RTDC).
The algorithm was run on a 32 bit laptop computer which has machine epsilon $\eps=2.2204\times 10^{-16}$.
As a test matrix, we have chosen a typical matrix arising from solving Laplace equations on squares using the finite difference method.
The test matrix is
$$
A=\frac1{h^2}
\begin{bmatrix}
 B&-I&&\b0\\
-I&\ddots&\ddots&\\
&\ddots& B&-I\\
\b0&&-I&B
\end{bmatrix},\,\, \mbox{where } B= \begin{bmatrix} 4&-1&&\b0\\
 -1&\ddots&\ddots&\\
& \ddots&4&-1\\
\b0&&-1&4
\end{bmatrix}
$$
where $h=\frac{1}{\sqrt n+1}$and the eigenvalues and eigenvectors of $A$ are well-known. We used Householder's
method to tridiagonalize it.
We use the same residual and orthogonality measures defined by \cite{Gu}
$$
\mathcal{R}=\frac{||A\Hat Q-\hat Q \hat \Lambda ||_2}{n\eps||A||_2} \quad \text{and} \quad \mathcal{O}=\frac{||I-\hat Q^T \hat Q||_2}{n\eps},
$$ where $\hat Q \hat \Lambda \hat Q^T$ is the computed spectral decomposition of $A$.

\begin{table}[h]
\centering
\begin{tabular}{|c|c|c|c|c|}
\hline
\multirow{2}{*}{n} & \multirow{2}{*}{\begin{tabular}[c]{@{}c@{}}Accuracy\\ measures\end{tabular}} & \multicolumn{3}{c|}{Eigensolver} \\ \cline{3-5}
                    &                      & QR & CDC & RTDC \\ \hline
\multirow{2}{*}{9} & $\mathcal{R}$ & 0.212 & 0.123 & 0.226 \\ \cline{2-5}
                    & $\mathcal{O}$ & 0.391 & 0.334 & 0.156 \\ \hline
\multirow{2}{*}{25}  & $\mathcal{R}$   & 0.202 & 0.226 & 0.224 \\ \cline{2-5}
                    & $\mathcal{O}$   & 0.292 & 0.411 & 0.174 \\ \hline
\multirow{2}{*}{100}  & $\mathcal{R}$   & 0.260 & 0.199 & 0.190 \\ \cline{2-5}
		   & $\mathcal{O}$  & 0.269 & 0.163 & 0.113 \\ \hline
\multirow{2}{*}{400}  & $\mathcal{R}$   & 0.235 & 0.177 & 0.318 \\ \cline{2-5}
		   & $\mathcal{O}$  & 0.081 & 0.075 & 0.068 \\ \hline
\end{tabular}
\caption{Accuracy measures of each algorithms}
\end{table}
We now compare computational complexity.
Since we have to use QR (or similar) method to solve the subproblems, we need $O(n^3)$ operations.
In this experiment, we used implicit QR for all algorithms.
Since we can divide $A$ into the three roughly equal sized sub-matrices, RTDC takes $\frac{4}{9}$ flops of CDC to calculate the eigenvalues.
For eigenvectors, CDC costs $\frac{1}{2}n^3+O(n^2)$ arithmetic operations. However, RTDC takes $\frac{1}{3}n^3+O(n^2)$ operations when we do not fix the orthogonality problem or use the first method in Section \ref{fixing}.
If we fix the orthogonality problem by the second method, we need extra $\frac{4}{9}n^3$ operations.
However, since eigenvector calculations consist of nothing else but matrix multiplications in (\ref{rank-two-modified}), it can be effectively parallelized.
Since our algorithm can calculate the eigenvalue twice as fast as the original CDC, our algorithm
 is easy to  parallelize.

\section{Conclusion}
We have extended the work of Cuppen's divide and conquer algorithm to rank two modification. 
Unlike the CDC, the deflation steps in our algorithm are nontrivial and the number of the eigenvalues vary 
in each interval $I_i$. We have shown how to classify and deflate the case when eigenvalues of $T_i$ coincide with those of $A$.
Also, we have found an algorithm to find the eigenvalue distribution.
In the original CDC algorithm, each subinterval contains exactly one eigenvalue, but in our algorithm such relation no longer holds.
Using the secular equation we have shown that each interval has either no eigenvalues, one or two eigenvalues.
Eigenvectors can also be computed by the  algorithm. However, to improve the orthogonality problem, we suggested to apply the idea used in the rank one modification. Since the orthogonality problem does not arise in this case, we can apply this algorithm in a recursive way. We believe that our algorithm can be effectively parallelized, and we will leave this as a future work.


\begin{thebibliography}{1}
\bibitem{Cuppen} J.J.M. Cuppen, {\em A Divide and Conquer Method for the Symmetric Tridiagonal Eigenproblem}, Numer. Math., 36 (1981), pp. 177-195.
\bibitem{Gu} M. Gu and S. C. Eisenstat, {\em A stable and efficient algorithm for the rank-one modification of the symmetric eigenproblem},  SIAM J. MATRIX ANAL. APPL., Vol. 15, No. 4 (1994), pp. 1266-1276.
\bibitem{Gu2} M. Gu and S. C. Eisenstat, {\em A divide-and-conquer algorithm for the symmetric tridiagonal eigenproblem}, SIAM J. Matrix Anal. Appl., 16 (1995), pp. 172-191.
\bibitem{francoise} F. Tisseur and J. Dongarra, {\em A parallel divide and conquer algorithm for the symmetric eigenvalue problem on distributed memory architectures}, SIAM Journal on Scientific Computing, 20, No. 6 (1999), pp. 2223-2236.
\bibitem{general} L. Elsner, A. Fasse, and E. Langmann, {\em A divide-and-conquer method for the tridiagonal generalized eigenvalue problem}, Journal of computational and applied mathematics, 86.1 (1997), pp. 141-148.
\bibitem{orth} D.C. Sorensen and PTP Tang, {\em On the orthogonality of eigenvectors computed by divide-and-conquer techniques}, SIAM J. Numerical Analysis, 28.6 (1991), pp. 1752-1775.
\bibitem{fully} J.J. Dongarra and D.C. Sorensen, {\em A fully parallel algorithm for the symmetric eigenvalue problem}, SIAM J. Scientific and Statistical Computing, 8.2 (1987), pp. 139-154.
  \end{thebibliography}
\end{document}